\def\defthm#1#2#3#4{
  \newtheorem{#1}[theorem]{#3}
  \newtheorem*{#1*}{#3}
  \newtheorem{#2}[theorem]{#4}
  \newtheorem*{#2*}{#4}
  \crefname{#1}{#3}{#4}
  \crefname{#2}{#4}{#4}  
}
\newtheoremstyle{mythm}%
{10pt}
{}
{\itshape}
{}
{\bf}
{.}
{.5em}
{}%
\newtheoremstyle{mydef}%
{10pt}
{3pt}
{}
{}
{\bf}
{.}
{.5em}
{}%
\newtheoremstyle{myrmk}%
{10pt}
{3pt}
{}
{}
{\bf}
{.}
{.5em}
{}%
\theoremstyle{mythm}
\newtheorem{theorem}{Theorem}[section]
\newtheorem*{theorem*}{Theorem}
\theoremstyle{mydef}
\theoremstyle{myrmk}
\newcommand{\Address}{{
  \bigskip
  \footnotesize

\textsc{Max Planck Institute for Mathematics, Vivatsgasse 7, 53111 Bonn, Germany}\par\nopagebreak
  \textit{E-mail address}: \texttt{stenzel@mpim-bonn.mpg.de}
}}
\title{Lurie's Unstraightening as a weak biequivalence of $\infty$-cosmoses}
\author{Raffael Stenzel}
\renewcommand\footnotemark{}
\begin{document}
\maketitle

\begin{abstract}
We give a direct proof of the fact that Lurie's Unstraightening functor induces an equivalence between the strict
$(\infty,2)$-category of indexed quasi-categories and the strict $(\infty,2)$-category of fibered quasi-categories over any 
given quasi-categorical base. We conclude that Unstraightening preserves simplicial cotensors up to a (strictly) 
natural homotopy equivalence, and thus gives rise to an accordingly weakened notion of cosmological biequivalence between the 
two underlying $\infty$-cosmoses.
\end{abstract}

\section{Introduction}


This note briefly addresses a rather subtle aspect of Lurie's Unstraightening construction. To motivate its interest, 
we recall that the Grothendieck construction in ordinary category theory (over some 1-categorical base $\mathcal{C}$) is an 
equivalence 
\[\int\colon\mathbf{Fun}(\mathcal{C}^{op},\mathrm{Cat})\rightarrow\mathbf{GFib}(\mathcal{C})\]
between the 2-category of (pseudo-functorial) $\mathcal{C}$-indexed categories and the 2-category of Grothendieck fibrations over
$\mathcal{C}$. See e.g.\ \cite[Theorem 8.3.1]{borceuxhandbook2}. As such it yields an equivalence between the theory of indexed 
categories (over some base) and the theory of fibered categories (over that base).

Accordingly, one expects the $\infty$-categorical analogon to the Grothendieck construction -- presented by Lurie's Unstraightening 
functor -- to constitute an equivalence between indexed and fibered $\infty$-category theory, in the sense that it induces an 
equivalence of respective $(\infty,2)$-categories. 
By construction, Straightening/Unstraightening (over a quasi-categorical base $\mathcal{C}$) is defined as an adjunction 
\begin{align}\label{equstunadj}
(\mathrm{St},\mathrm{Un})\colon\mathbf{S}^+_{/\mathcal{C}^{\sharp}}\rightarrow\mathbf{Fun}(\mathfrak{C}(\mathcal{C})^{op},\mathbf{S}^+)
\end{align}
between the category of marked simplicial sets sliced over the (maximally marked) quasi-category $\mathcal{C}^{\sharp}$ and the 
category of contravariant simplicially enriched functors from its freely generated simplicial category $\mathfrak{C}(\mathcal{C})$ into 
the simplicially enriched category $\mathbf{S}^+$ of marked simplicial sets.
This adjunction is shown in \cite[Theorem 3.2.0.1]{luriehtt} to be a Quillen equivalence of model categories if one equips the left 
hand side with the cartesian model structure ``$\mathrm{Cart}$'' over $\mathcal{C}^{\sharp}$ (\cite[Section 3.1.3]{luriehtt}) and the 
right hand side with the projective model structure ``$\mathrm{Proj}$'' with respect to the cartesian model structure on $\mathbf{S}^+$ (i.e.\ 
on the slice $\mathbf{S}^+_{/(\Delta^0)^{\sharp}}$). As such, it induces an equivalence
\[(\mathrm{St},\mathrm{Un})\colon\mathrm{Cart}(\mathcal{C})\rightarrow\mathrm{Fun}(\mathcal{C}^{op},\mathrm{Cat}_{\infty})\]
between the underlying $\infty$-category of cartesian fibrations over $\mathcal{C}$ and the underlying $\infty$-category of
$\mathcal{C}$-indexed $\infty$-categories. 

Additionally, both model categories $(\mathbf{S}^+_{/\mathcal{C}^{\sharp}},\mathrm{Cart})$ and
$(\mathbf{Fun}(\mathfrak{C}(\mathcal{C})^{op},(\mathbf{S}^+,\mathrm{Cart})),\mathrm{Proj})$ exhibit an enrichment over the model 
category $(\mathrm{S},\mathrm{QCat})$ for quasi-categories (\cite[Remark 3.1.4.5]{luriehtt} and \cite[Remark A.3.3.4]{luriehtt}). They 
both can hence be assigned an underlying strict $(\infty,2)$-category $\mathbf{Cart}(\mathcal{C})$ and
$\mathbf{Fun}(\mathcal{C}^{op},\mathbf{Cat}_{\infty})$ each, given by the full simplicial category spanned by the 
respective bifibrant objects. Here, a strict $(\infty,2)$-category is defined to be a simplicially enriched category whose hom-objects 
are quasi-categories (\cite[Section 5.5.8]{kerodon}).
Now, if the adjunction (\ref{equstunadj}) was to be simplicially enriched itself, it would induce an according equivalence 
of underlying strict $(\infty,2)$-categories by a fairly straight-forward formal argumentation. However, the Straightening functor is 
not a simplicially enriched functor, which essentially follows from the fact that the unmarked Straightening functor over the point is 
not (isomorphic to) the identity \cite[Remark 2.2.2.6]{luriehtt}. This failure is exemplified by the fact that Straightening generally 
does not preserve simplicial tensors up to natural isomorphism. In particular, the adjunction (\ref{equstunadj}) is not simplicially 
enriched either. Lurie shows in \cite[Corollary 3.2.1.15]{luriehtt} that Straightening however does preserve simplicial tensors up to a 
natural homotopy equivalence. This implies that at least its right adjoint can be simplicially enriched after all, as has been observed 
right away in \cite[Section 3.2.4]{luriehtt}. There is however no mention in \cite{luriehtt} of the right adjoints behaviour with 
regards to simplicial cotensors, presumably because the book avoids $(\infty,2)$-categorical considerations in general. And the 
dual of \cite[Corollary 3.2.1.15]{luriehtt} as stated in Theorem~\ref{thmunstraight+} is not a formal triviality, and neither 
is the fact that Unstraightening indeed induces a DK-equivalence of underlying strict $(\infty,2)$-categories.

Thus, in Section~\ref{secproof} we give the evident arguments which show that Unstraightening yields a functor of strict
$(\infty,2)$-categories, that it induces equivalences of derived hom-quasi-categories, and that the dual of
\cite[Corollary 3.2.1.15]{luriehtt} holds. Regarding the latter, we recall that Unstraightening cannot preserve simplicial cotensors 
up to natural isomorphism either as this would again imply that the entire Straightening/Unstraightening adjunction is simplicially enriched. The objective of this paper is hence to give a short proof of the following statement.

\begin{theorem}\label{thmunstraight+}
Let $\mathcal{C}$ be a quasi-category. The Unstraightening functor over $\mathcal{C}$ exhibits a simplicial enrichment
\[\mathbf{Un}_{\mathcal{C}}\colon\mathbf{Fun}(\mathfrak{C}(\mathcal{C})^{op},(\mathbf{S}^+,\mathrm{Cart}))_{\mathrm{proj}}\rightarrow(\mathbf{S}^+_{/\mathcal{C}},\mathrm{Cart})\] 
which induces a DK-equivalence of underlying strict $(\infty,2)$-categories. It furthermore comes equipped with a binatural 
transformation
\[\mathrm{Un}_{\mathcal{C}}(F^I)\rightarrow\mathrm{Un}_{\mathcal{C}}(F)^I\]
between the respective simplicial cotensors for $F\colon\mathfrak{C}(\mathcal{C})^{op}\rightarrow\mathbf{S}^+$ and $I\in\mathbf{S}$, 
which is a cartesian equivalence whenever $F$ is projectively fibrant. 
\end{theorem}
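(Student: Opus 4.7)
The plan is to bootstrap from Lurie's tensor comparison $\alpha_{K,X}\colon \mathrm{St}(X\otimes K)\to \mathrm{St}(X)\otimes K$ of \cite[Corollary 3.2.1.15]{luriehtt}, using its adjoint-dual as the single input for building the enrichment of $\mathrm{Un}$, constructing the cotensor comparison $\beta_{K,F}$, and verifying the DK-equivalence.

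\medskip

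\emph{Enrichment and cotensor comparison map.} First I would build the simplicial enrichment
\[\mathrm{Map}_{\mathrm{proj}}(F,G)\longrightarrow \mathrm{Map}_{/\mathcal{C}}(\mathrm{Un}(F),\mathrm{Un}(G))\]
as the transpose across $\mathrm{St}\dashv \mathrm{Un}$ and tensor-hom of the composite
\[\mathrm{St}\bigl(\mathrm{Un}(F)\otimes \mathrm{Map}(F,G)\bigr) \xrightarrow{\alpha} \mathrm{St}(\mathrm{Un}(F))\otimes \mathrm{Map}(F,G) \xrightarrow{\epsilon \otimes \mathrm{id}} F\otimes \mathrm{Map}(F,G) \xrightarrow{\mathrm{ev}} G,\]
checking unitality and associativity from naturality of $\alpha$, the triangle identities of $\mathrm{St}\dashv\mathrm{Un}$, and coherence of tensors/cotensors. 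The binatural transformation $\beta_{K,F}\colon \mathrm{Un}(F^K)\to \mathrm{Un}(F)^K$ is then the adjoint of this enrichment applied to the cotensor counit $F^K \otimes K\to F$. A direct adjoint calculation identifies the induced hom-set map $\mathrm{Hom}(X,\mathrm{Un}(F^K)) \to \mathrm{Hom}(X,\mathrm{Un}(F)^K)$, under $\mathrm{Hom}(X,\mathrm{Un}(F^K))\cong \mathrm{Hom}(\mathrm{St}(X)\otimes K,F)$ and $\mathrm{Hom}(X,\mathrm{Un}(F)^K)\cong \mathrm{Hom}(\mathrm{St}(X\otimes K),F)$, with precomposition by $\alpha_{K,X}$.

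\medskip

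\emph{Cartesian equivalence for fibrant $F$.} Cofibrations in $(\mathbf{S}^+_{/\mathcal{C}},\mathrm{Cart})$ are monomorphisms, so every object is cofibrant; hence $\alpha_{K,X}$ is a projective weak equivalence between cofibrant objects for every $X$. Applying the above identification levelwise to $X\otimes \Delta^\bullet$, the map
$\mathrm{Map}(X,\mathrm{Un}(F^K)) \to \mathrm{Map}(X,\mathrm{Un}(F)^K)$
is the one induced on derived hom-spaces into $F$ by the weak equivalences $\alpha_{K, X\otimes\Delta^n}$, which is a weak equivalence of simplicial sets by the enriched Ken-Brown property of the projective model structure since $F$ is fibrant. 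As both $\mathrm{Un}(F^K)$ and $\mathrm{Un}(F)^K$ are cartesian-fibrant, the enriched Yoneda lemma concludes that $\beta_{K,F}$ is a cartesian equivalence.

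\medskip

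\emph{DK-equivalence.} Essential surjectivity on homotopy categories is immediate from the Quillen equivalence. For equivalence on hom quasi-categories, I would factor the natural map $\mathrm{Map}(F,G)\to \mathrm{Map}(\mathrm{Un}(F),\mathrm{Un}(G))$ through the intermediate simplicial set $\mathrm{Map}(\mathrm{Un}(F),\mathrm{Un}(G^{\Delta^\bullet}))$ with $n$-th level $\mathrm{Hom}(\mathrm{Un}(F),\mathrm{Un}(G^{\Delta^n}))$: the first factor is the derived hom-equivalence induced by the Quillen equivalence on the bifibrant pairs $(F,G^{\Delta^n})$, and the second is levelwise $\beta_{\Delta^n,G,\ast}$, which is a weak equivalence of simplicial sets by the previous step. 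The main obstacle is precisely that previous step — transferring the pointwise projective equivalence $\alpha$ to the cartesian equivalence $\beta$ — since $\mathrm{St}$ is not simplicially enriched and the adjoint identification only delivers set-level coincidences; the argument survives only because every object of the slice cartesian structure is cofibrant, making Lurie's cofibrant-case comparison universal in $X$.
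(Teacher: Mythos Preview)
Your route is sound in outline and genuinely different from the paper's. The paper does not transpose Lurie's tensor comparison $\alpha$ through the adjunction; it works entirely on the Unstraightening side via the mate $\bar{q}\colon 1\Rightarrow\mathrm{Un}_*$ of Lurie's $q\colon\mathrm{St}_*\Rightarrow 1$. Using that $\mathrm{Un}_{\mathcal{C}}$ preserves products (as a right adjoint) and is pullback-stable, the enrichment is assembled as an explicit chain of isomorphisms in which the \emph{only} non-invertible arrow is precomposition by $\pi_{\bar{q}_{(\Delta^\bullet)^\flat}}\times 1$ inside $\mathbf{S}^+_{/\mathcal{C}}$ itself. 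The DK-equivalence then reduces to showing that $\bar{q}_{(\Delta^n)^\flat}$ is a cartesian equivalence in $\mathbf{S}^+$, together with a general lemma on simplicially enriched right Quillen equivalences applied to groupoid cores and a complete-Segal-space argument; the cotensor comparison is derived afterwards as a corollary. Your ordering --- build $\beta$ first from $\alpha$ by adjunction, then deduce the hom-equivalence --- is arguably more direct, and your reliance on all slice objects being cofibrant is exactly parallel to the paper's use of that fact.

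There is however a recurring soft spot in your argument. In the $\beta$-step and again in the second factor of your DK factorisation you produce a map of simplicial sets which, \emph{at level $n$}, is pre- or post-composition on Hom-\emph{sets} by a weak equivalence ($\alpha_{K,X\otimes\Delta^n}$, respectively $\beta_{\Delta^n,G}$), and then appeal to ``enriched Ken Brown''. But enriched Ken Brown turns a \emph{single} weak equivalence between cofibrants into an equivalence of mapping \emph{spaces}; here the two simplicial sets under comparison are not mapping spaces in the same enriched model category (precisely because $\mathrm{St}\dashv\mathrm{Un}$ is not an enriched adjunction), so a cosimplicial family of Hom-set identifications with $\alpha^*$ does not formally yield a weak equivalence of simplicial sets. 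For $\beta$ the repair is cheap: since $\beta_{K,F}$ is a map between bifibrant objects of the slice, Yoneda in the homotopy category reduces the claim to bijectivity of $[X,\beta]$ for each $X$; your set-level identification with $\alpha_{K,X}^*$ is natural in $X$, hence descends to homotopy classes, and ordinary Ken Brown for $\alpha_{K,X}$ finishes. For the DK step, once you unwind through the adjunction your first factor is precomposition by the counit $\epsilon_F\colon\mathrm{St}\,\mathrm{Un}(F)\to F$, a single weak equivalence between projective cofibrants, so there enriched Ken Brown genuinely applies. The second factor (level-wise $(\beta_{\Delta^n,G})_*$) still needs an argument beyond ``pointwise post-composition by an equivalence''; one option is a Reedy argument for the cosimplicial object $\beta_{\Delta^\bullet,G}$ between Reedy fibrant diagrams, another is the paper's passage to groupoid cores and Rezk completeness.
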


In Section~\ref{seccosmo} we discuss Theorem~\ref{thmunstraight+} from an $\infty$-cosmological point of view.

\paragraph{Corresponding results in the literature.}

The fact that Unstraightening induces an equivalence of underlying $(\infty,2)$-categories (modelled as fibrant scaled simplicial sets) 
follows as a special case of the fact that the more general (and more involved) Unstraightening construction of $\infty$-categories 
indexed over an $(\infty,2)$-category as introduced in \cite{lgood} always induces an equivalence of underlying
$(\infty,2)$-categories. This is shown in \cite[Lemma 1.4.3]{ghl2fib}. Albeit obscured by a considerable amount of additional theory 
and a few non-trivial details omitted, 
the more general argument given there is essentially the same as the argument give here. Thus, the new insight this note provides in 
this respect is merely marginal.

Almost simultaneously, the fact that Straightening preserves simplicial tensors up to homotopy equivalence has been used in
\cite[Proposition 6.9]{ghnlaxlimits} to show that it induces an equivalence of the two underlying $\infty$-categories also when 
equipped with their canonical enhanced mapping $\infty$-categories (\cite[Section 6]{ghnlaxlimits}). This statement however does not 
quite show that Straightening is an equivalence of $(\infty,2)$-categories, because it is not shown to be a functor of $(\infty,2)$-
categories in the first place. To elaborate, the $\infty$-category with enhanced mapping $\infty$-categories which underlies a given
$(\infty,2)$-category forgets the horizontal composition operation and hence all according horizontal coherences for all higher cells. 
A crucial consequence of this is that, although Unstraightening is the inverse of Straightening (as functors of underlying
$\infty$-categories) there is no formal reason why this would readily imply that Unstraightening induces equivalences of enhanced 
mapping $\infty$-categories as well.

\begin{acknowledgments*}
This note was written while being a guest at the Max Planck Institute for Mathematics in Bonn, Germany, whose hospitality is greatly 
appreciated. 
\end{acknowledgments*}


\section{Brief reminder of the underlying constructions}

We recall Lurie's Unstraightening construction (\cite[Section 3.2]{luriehtt}). Therefore, first recall that the category
$\mathbf{S}^+$ of marked simplicial sets is cartesian closed, and hence is particular simplicially enriched via the canonical forgetful 
functor $U\colon\mathbf{S}^+\rightarrow\mathbf{S}$. This yields the ``flat'' simplicial enrichment of 
\cite[Section 3.1.3]{luriehtt} in the following sense: The functor $U$ has both a left adjoint
$(\cdot)^{\flat}\colon\mathbf{S}\rightarrow\mathbf{S}^+$ and a right adjoint $(\cdot)^{\sharp}\colon\mathbf{S}\rightarrow\mathbf{S}^+$. 
The simplicial 
enrichment $\mathbf{S}^+(X,Y):=U(Y^X)$ exhibits simplicial tensors in $\mathbf{S}^+$ via the formula $I\otimes X:=I^{\flat}\times X$. 
The simplicial category $\mathbf{S}^+$ furthermore may be equipped with the cartesian model structure ``$\mathrm{Cart}$''.
Its cofibrations are exactly the cofibrations of underlying simplicial sets; in particular, all marked simplicial sets are cofibrant.
Its fibrant objects are exactly the marked simplicial sets $I^{\natural}$ where $I$ is a quasi-category and $I^{\natural}$ denotes $I$ 
marked by its set of equivalences. Furthermore, the cartesian model structure on $\mathbf{S}^+$ is
enriched over the Joyal model structure $(\mathbf{S},\mathrm{QCat})$ for quasi-categories as well, and the forgetful functor
$(\mathbf{S}^+,\mathrm{Cart})\rightarrow(\mathbf{S},\mathrm{QCat})$ is a simplicially enriched right Quillen 
equivalence \cite[Proposition 3.1.5.3]{luriehtt}. It hence induces a simplicial functor on the simplicially enriched categories
$(\mathbf{S}^+,\mathrm{Cart})^f\rightarrow\mathrm{QCat}$ of (bi)fibrant objects which in fact is an isomorphism. In the 
following, we denote the simplicially enriched category of fibrant objects associated to the $(\mathbf{S},\mathrm{QCat})$-enriched model 
category $(\mathbf{S}^+,\mathrm{Cart})$ by $\mathbf{Cart}$. More generally, for a simplicial set $S$, the simplicially enriched category 
of fibrant objects associated to the simplicially enriched category $\mathbf{S}^+_{/S}$ equipped with its according cartesian model 
structure ``$\mathrm{Cart}$'' is denoted by $\mathbf{Cart}(S)$.

Now, Unstraightening over a quasi-category $\mathcal{C}$ is a functor
\begin{align}\label{equdefunst}
\mathrm{Un}_{\mathcal{C}}\colon\mathbf{Fun}(\mathfrak{C}(\mathcal{C})^{op},\mathbf{S}^+)\rightarrow\mathbf{S}^+_{/\mathcal{C}^{\sharp}}.
\end{align}
It is the right adjoint part of a Quillen equivalence between the projective model structure (with respect to
$(\mathbf{S}^+,\mathrm{Cart})$) on the left hand side and the cartesian model structure over $\mathcal{C}^{\sharp}$ on the right hand 
side. Both model structures are $(\mathbf{S},\mathrm{QCat})$-enriched and hence give rise to an underlying strict
$(\infty,2)$-category each which is defined as the respective quasi-categorically enriched full subcategory of bifibrant objects.

Generally, a strict $(\infty,2)$-category \cite[Section 5.5.8]{kerodon} is a simplicially enriched category whose hom-objects are
quasi-categories. A functor of strict $(\infty,2)$-categories is just a simplicially enriched functor between such. A functor of strict
$(\infty,2)$-categories is a DK-equivalence of strict $(\infty,2)$-categories if it is essentially surjective on associated
homotopy-categories and induces equivalences between hom-quasi-categories.

\section{Proof of the Theorem}\label{secproof}

In this section we prove Theorem~\ref{thmunstraight+}. Therefore, we recall the existence of a natural weak cartesian 
equivalence $q\colon\mathrm{St}_{\ast}\Rightarrow 1$ from the Straightening functor
$\mathrm{St}_{\ast}\colon\mathbf{S}^+\rightarrow\mathbf{S}^+$ over a point to the identity on $\mathbf{S}^+$
\cite[Proposition 3.2.1.14]{luriehtt}. Given that the tuple $(\mathrm{St}_{\ast},\mathrm{Un}_{\ast})$ forms an adjoint pair, we obtain 
a mate of the form
\begin{align}\label{equdefcompcotensors}
\bar{q}\colon 1\xRightarrow{\eta}\mathrm{Un}_{\ast}\mathrm{St}\xRightarrow{\mathrm{Un}_{\ast}(q)}\mathrm{Un}_{\ast}.
\end{align}

As noted in \cite[Proposition 2.15]{hhr_str}, we obtain the following formal consequence of \cite[Proposition 3.2.1.14]{luriehtt}.

\begin{lemma}\label{lemmaunstraight1}
The map $\bar{q}_{X}\colon X\rightarrow\mathrm{Un}_{\ast}(X)$ is a cartesian equivalence whenever the 
marked simplicial set $X\in\mathbf{S}^+$ is fibrant in the marked model structure.
\end{lemma}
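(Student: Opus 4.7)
The plan is to deduce the lemma formally from two already-invoked inputs: that $(\mathrm{St}_{\ast},\mathrm{Un}_{\ast})$ is a Quillen equivalence of $(\mathbf{S}^+,\mathrm{Cart})$ with itself, and that $q\colon\mathrm{St}_{\ast}\Rightarrow 1$ is a natural cartesian equivalence. The subtlety one has to navigate is that, although $X$ is assumed fibrant, the object $\mathrm{St}_{\ast}X$ typically is not, so Ken Brown's lemma cannot be applied directly to $\mathrm{Un}_{\ast}(q_X)$. I would get around this by interposing a fibrant replacement of $\mathrm{St}_{\ast}X$ and using that the derived unit at a cofibrant object is an equivalence.

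Concretely, first choose a fibrant replacement $j\colon \mathrm{St}_{\ast}X\rightarrow R$, i.e.\ a trivial cofibration into a fibrant object, which exists by cartesian model structure factorization. Since $j$ is a trivial cofibration and $X$ is fibrant, the lifting property yields a map $\tilde{q}\colon R\rightarrow X$ with $\tilde{q}\circ j=q_X$; by two-out-of-three applied to this factorization, and using that $q_X$ is a cartesian equivalence, $\tilde{q}$ is a cartesian equivalence between fibrant objects.

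Now apply $\mathrm{Un}_{\ast}$. Ken Brown's lemma, together with the fact that $\mathrm{Un}_{\ast}$ is right Quillen, implies that $\mathrm{Un}_{\ast}(\tilde{q})$ is a cartesian equivalence. Moreover, because every object of $\mathbf{S}^+$ is cofibrant and $(\mathrm{St}_{\ast},\mathrm{Un}_{\ast})$ is a Quillen equivalence, the derived unit at $X$, which is precisely the composite
\[
X\xrightarrow{\eta_X}\mathrm{Un}_{\ast}\mathrm{St}_{\ast}X\xrightarrow{\mathrm{Un}_{\ast}(j)}\mathrm{Un}_{\ast}R,
\]
is itself a cartesian equivalence.

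Finally, using the factorization $q_X=\tilde{q}\circ j$, rewrite
\[
\bar{q}_X \;=\; \mathrm{Un}_{\ast}(q_X)\circ\eta_X \;=\; \mathrm{Un}_{\ast}(\tilde{q})\circ\mathrm{Un}_{\ast}(j)\circ\eta_X.
\]
This exhibits $\bar{q}_X$ as the composite of the derived unit at $X$ with $\mathrm{Un}_{\ast}(\tilde{q})$, each a cartesian equivalence, and hence as a cartesian equivalence itself. The only real obstacle is the preliminary observation that one must pass through a fibrant replacement of $\mathrm{St}_{\ast}X$ before applying $\mathrm{Un}_{\ast}$; everything else is standard model-categorical bookkeeping.
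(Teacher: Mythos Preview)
Your proof is correct and follows essentially the same strategy as the paper: factor through a fibrant replacement of $\mathrm{St}_{\ast}X$, use that the derived unit of a Quillen equivalence is a weak equivalence at cofibrant objects, and use Ken Brown to control $\mathrm{Un}_{\ast}$ on the remaining map between fibrant objects. The only organizational difference is that the paper uses a functorial fibrant replacement and the naturality square for $q$ (comparing $\mathrm{Un}_{\ast}\mathbb{R}(q_X)$ with $\mathrm{Un}_{\ast}(\rho_X)$ and concluding via 2-out-of-3), whereas you instead lift $q_X$ along the trivial cofibration $j$ using fibrancy of $X$ to express $\bar{q}_X$ directly as a composite of two equivalences; your version is marginally more direct since it avoids the extra 2-out-of-3 step through $\mathrm{Un}_{\ast}\mathbb{R}X$.
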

\begin{proof}
Let $\rho\colon 1\Rightarrow\mathbb{R}$ be a fibrant replacement functor in $(\mathbf{S}^+,\mathrm{Cart})$. For every $X\in\mathbf{S}^+$ 
we obtain a diagram in $\mathbf{S}^+$ as follows.
\[\xymatrix{
X\ar[d]_{\eta_X} \ar@/^/[drr]^{\sim} & & \\
\mathrm{Un}_{\ast}\mathrm{St}_{\ast}(X)\ar[rr]^{\mathrm{Un}_{\ast}\rho_{\mathrm{St}_{\ast}(X)}}\ar[d]_{\mathrm{Un}_{\ast}(q_X)} & & \mathrm{Un}_{\ast}\mathbb{R}\mathrm{St}_{\ast}(X)\ar[d]^{\mathrm{Un}_{\ast}\mathbb{R}(q_X)}_[@!-90]{\sim} \\
\mathrm{Un}_{\ast}(X)\ar[rr]_{\mathrm{Un}_{\ast}\rho_X} & & \mathrm{Un}_{\ast}\mathbb{R}(X)
}\]
The curved arrow on top is the derived unit at $X$. Since $(\mathrm{St}_{\ast},\mathrm{Un}_{\ast})$ is a Quillen equivalence, it is a 
weak cartesian equivalence for all (cofibrant) $X\in\mathbf{S}^+$. The vertical arrow $\mathrm{Un}_{\ast}\mathbb{R}(q_X)$ is a cartesian 
equivalence because $q_X$ is a weak cartesian equivalence. The bottom horizontal arrow $\mathrm{Un}_{\ast}(\rho_X)$ is a cartesian 
equivalence whenever $X$ is fibrant (as $\mathrm{Un}_{\ast}$ is a right Quillen functor). By 2-out-of-3 it follows that the left 
vertical composition $\bar{q}_X\colon X\rightarrow\mathrm{Un}_{\ast}(X)$ is a cartesian equivalence whenever $X$ is fibrant.
\end{proof}

\begin{corollary}\label{corunstraight1}
The map $\bar{q}_{I^{\flat}}\colon I^{\flat}\rightarrow\mathrm{Un}_{\ast}(I^{\flat})$ is a cartesian equivalence for all
quasi-categories $I$.
\end{corollary}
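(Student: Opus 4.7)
The natural plan is to reduce the statement to Lemma~\ref{lemmaunstraight1} by comparing $I^\flat$ with its cartesian fibrant replacement. Since $I$ is a quasi-category, the marked simplicial set $I^\natural$ is fibrant in $(\mathbf{S}^+,\mathrm{Cart})$, and the canonical inclusion $\iota\colon I^\flat\rightarrow I^\natural$ (marking the equivalences of $I$) is a trivial cofibration, hence a cartesian equivalence. Naturality of $\bar q$ yields a commutative square
\[
\xymatrix{
I^\flat\ar[r]^{\bar q_{I^\flat}}\ar[d]_{\iota} & \mathrm{Un}_{\ast}(I^\flat)\ar[d]^{\mathrm{Un}_{\ast}(\iota)}\\
I^\natural\ar[r]_{\bar q_{I^\natural}} & \mathrm{Un}_{\ast}(I^\natural)
}
\]
in which the bottom horizontal arrow is a cartesian equivalence by Lemma~\ref{lemmaunstraight1} (as $I^\natural$ is fibrant) and the left vertical arrow is a cartesian equivalence by construction. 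By the 2-out-of-3 property applied to this square, the conclusion reduces to showing that the right vertical map $\mathrm{Un}_\ast(\iota)$ is a cartesian equivalence.

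The main obstacle is precisely this reduction: $\mathrm{Un}_\ast$ is a right Quillen functor, so a priori it only preserves weak equivalences between fibrant objects, and $I^\flat$ is not fibrant in the cartesian model structure. To address this, I would not try to invoke a general preservation result for $\mathrm{Un}_\ast$, but rather exploit the specific structure of $\iota$: it is a monomorphism which is the identity on underlying simplicial sets and merely enlarges the marked edges. Since $\mathrm{Un}_\ast$ is a right adjoint it preserves monomorphisms, so $\mathrm{Un}_\ast(\iota)$ is itself a monomorphism of marked simplicial sets.

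To conclude, I would inspect $\mathrm{Un}_\ast(\iota)$ directly through the adjunction $(\mathrm{St}_\ast,\mathrm{Un}_\ast)$. The $n$-simplices of $\mathrm{Un}_\ast(Y)$ are marked maps $\mathrm{St}_\ast((\Delta^n)^\flat)\rightarrow Y$, and the marked edges of $\mathrm{Un}_\ast(Y)$ are parametrised by marked maps out of $\mathrm{St}_\ast((\Delta^1)^\sharp)$. Using the natural weak cartesian equivalence $q\colon \mathrm{St}_\ast\Rightarrow 1$ one identifies, up to cartesian equivalence, these simplicial mapping sets with maps out of $(\Delta^n)^\flat$ and $(\Delta^1)^\sharp$ respectively. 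Since the marked edges of $I^\natural$ are by definition the equivalences of the quasi-category $I$, the change of marking from $I^\flat$ to $I^\natural$ precisely amounts to enlarging the marking on $\mathrm{Un}_\ast(\,\blank\,)$ by those edges whose underlying simplex is an equivalence in the target quasi-category -- which is again a cartesian trivial cofibration. Hence $\mathrm{Un}_\ast(\iota)$ is a cartesian equivalence, completing the proof by the reduction above.
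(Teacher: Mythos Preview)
Your reduction via the naturality square and Lemma~\ref{lemmaunstraight1} is exactly the paper's approach, and you correctly isolate the crux: showing that $\mathrm{Un}_\ast(\iota)$ is a cartesian equivalence. The gap is in your final paragraph. Invoking the weak equivalence $q\colon\mathrm{St}_\ast\Rightarrow 1$ to ``identify, up to cartesian equivalence'' the sets of marked maps $\mathrm{St}_\ast((\Delta^n)^\flat)\to Y$ with maps $(\Delta^n)^\flat\to Y$ does not do what you need. Precomposition with a weak equivalence only induces a bijection on homotopy classes when the target is fibrant, and here $Y=I^\flat$ is precisely the non-fibrant object whose Unstraightening you are trying to control. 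More fundamentally, an ``up to equivalence'' identification cannot pin down the underlying simplicial set of $\mathrm{Un}_\ast(I^\flat)$ or its marking on the nose, which is what is required to conclude that $\mathrm{Un}_\ast(\iota)$ is literally an inclusion of the form $J^\flat\hookrightarrow J^\natural$ for some quasi-category $J$.

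The paper closes this gap by an explicit, non-homotopical description: for any marking $E$ on $I$, the underlying simplicial set of $\mathrm{Un}_\ast(I,E)$ is the unmarked Unstraightening $\mathrm{Un}_\ast(I)$; although the higher simplices differ, the vertices and edges of $\mathrm{Un}_\ast(I)$ are literally those of $I$, and under this identification the marked edges of $\mathrm{Un}_\ast(I,E)$ are exactly the edges in $E$. This gives $\mathrm{Un}_\ast(I^\flat)=\mathrm{Un}_\ast(I)^\flat$ and $\mathrm{Un}_\ast(I^\natural)=\mathrm{Un}_\ast(I)^\natural$ as equalities, so $\mathrm{Un}_\ast(\iota)$ is the canonical inclusion $\mathrm{Un}_\ast(I)^\flat\hookrightarrow\mathrm{Un}_\ast(I)^\natural$. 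Since $\mathrm{Un}_\ast(I^\natural)$ is fibrant (as $\mathrm{Un}_\ast$ is right Quillen), $\mathrm{Un}_\ast(I)$ is a quasi-category, and this inclusion is acyclic by the very lifting argument you gave for $\iota$ itself.
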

\begin{proof}
We first note that the canonical inclusion $\iota\colon I^{\flat}\rightarrow I^{\natural}$ in $\mathbf{S}^+$ computes a fibrant 
replacement of $I^{\flat}$: It is a cofibration of marked simplicial sets since the identity on the simplicial set $I$ is a cofibration. 
Furthermore, the fibrant objects in $(\mathbf{S}^+,\mathrm{Cart})$ are exactly the marked simplicial sets of the form $J^{\natural}$ for 
quasi-categories $J$. To prove acyclicity of the inclusion, it suffices to show that it has the left lifting property against all 
fibrations in $(\mathbf{S}^+,\mathrm{Cart})$ between fibrant objects (\cite[Lemma 7.14]{jtqcatvsss}). Thus, we are to show that every 
square of the form
\[\xymatrix{
I^{\flat}\ar[r]^{f}\ar@{^(->}[d] & J^{\natural}\ar@{->>}[d] \\
I^{\natural}\ar[r]^{g} & K^{\natural}
}\]
in $\mathbf{S}^+$ for quasi-categories $I$, $J$ and $K$ admits a lift. This however follows from the fact that the underlying maps of 
simplicial sets are all functors of quasi-categories and hence preserve equivalences.

As $\bar{q}$ is a natural transformation, we obtain a square of marked simplicial sets as follows.
\begin{align}\label{diagcorunstraight1}
\begin{gathered}
\xymatrix{
I^{\flat}\ar[r]^(.4){\bar{q}_{I^{\flat}}}\ar@{^(->}[d]_{\iota} & \mathrm{Un}_{\ast}(I^{\flat})\ar@{^(->}[d]^{\mathrm{Un}_{\ast}(\iota)} \\
I^{\sharp}\ar[r]_(.4){\bar{q}_{I^{\natural}}} & \mathrm{Un}_{\ast}(I^{\natural})
}
\end{gathered}
\end{align}
Its bottom map is a weak cartesian equivalence by Lemma~\ref{lemmaunstraight1} as $I$ is a quasi-category by assumption.
The left vertical inclusion is acyclic as we just observed.
The right vertical map $\mathrm{Un}_{\ast}(\iota)$ is the canonical inclusion of
$\mathrm{Un}_{\ast}(I^{\flat})=\mathrm{Un}_{\ast}(I)^{\flat}$ in $\mathrm{Un}_{\ast}(I^{\natural})=\mathrm{Un}_{\ast}(I)^{\natural}$. 
Indeed, for any marking $E$ on $I$, the underlying simplicial set of the Unstraightening $\mathrm{Un}_{\ast}(I,E)$ is the (unmarked) 
Unstraightening $\mathrm{Un}_{\ast}(I)$ by construction (as explicitly stated in \cite[Section 2.4]{hhr_str}).
Although the general simplicial structures diverge, the vertices and edges of $\mathrm{Un}_{\ast}(I)$ are exactly the vertices and edges 
of $I$ (with the same boundaries and degeneracies); 
under this identification, the edges marked in $\mathrm{Un}_{\ast}(I,E)$ for any set of edges $E\in I_1$ are exactly the edges in $E$. 
It follows that the inclusion $\mathrm{Un}_{\ast}(\iota)\colon\mathrm{Un}_{\ast}(I)^{\flat}\hookrightarrow\mathrm{Un}_{\ast}(I)^{\natural}$ is acyclic as well. Thus, 
by 2-out-of-3, the top map in the square (\ref{diagcorunstraight1}) is a weak cartesian equivalence, too.
\end{proof}

Furthermore, we record the following (fairly specific) generalization of the well-known fact that a simplicial enrichment of a Quillen 
pair allows to compute its derivations simply by restriction to the simplicially enriched categories of bifibrant objects 
(\cite[Proposition 5.2.4.6]{luriehtt}).

\begin{lemma}\label{lemmainftyadjsemi}
Let $\mathbf{M}$ and $\mathbf{N}$ be simplicial model categories and (for brevity) suppose all objects in $\mathbf{M}$ are cofibrant. 
Suppose
$\mathbf{G}\colon\mathbf{N}\rightarrow\mathbf{M}$ is a simplicially enriched functor such that its underlying functor
$G\colon\mathbf{N}_0\rightarrow\mathbf{M}_0$ is a right Quillen functor. Then the homotopy-coherent nerve
$N_{\Delta}(G)\colon N_{\Delta}(\mathbf{N})\rightarrow N_{\Delta}(\mathbf{M})$ computes the natural functor of homotopy
$\infty$-categories associated to $G$. In particular, for all cofibrant objects $A\in\mathbf{N}$ and all fibrant objects
$B\in\mathbf{N}$ the simplicial action
\begin{align}\label{equlemmainftyadjsemi}
\mathbf{G}(A,B)\colon\mathbf{N}(A,B)\rightarrow\mathbf{M}(G(A),G(B))
\end{align}
is homotopy equivalent to its action
\[\mathbb{R}(G)(A,B)\colon\mathbf{N}_0(A,B)^h\rightarrow\mathbf{M}_0(G(A),G(B))^h\]
on derived mapping spaces. This implies that the action (\ref{equlemmainftyadjsemi}) is an equivalence of hom-spaces whenever $G$ is a 
right Quillen equivalence.
\end{lemma}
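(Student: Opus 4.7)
The plan is to reduce the statement to two standard ingredients: the fact that, in a simplicial model category, the simplicial hom-object between a cofibrant source and fibrant target models the derived mapping space; and the identification of the homotopy-coherent nerve of $\mathbf{G}$ restricted to the bifibrant subcategory with the derived functor $\mathbb{R}G$, via the universal property of the $\infty$-categorical localization.

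First I would observe that $\mathbf{G}$ restricts to a simplicially enriched functor $\mathbf{G}^{cf}\colon\mathbf{N}^{cf}\rightarrow\mathbf{M}^{cf}$ between the full subcategories of bifibrant objects. Being right Quillen, $G$ sends fibrant objects to fibrant objects; by the hypothesis that every object of $\mathbf{M}$ is cofibrant, the image is automatically bifibrant. The hom-objects of source and target are then Kan complexes, so that $N_\Delta(\mathbf{G}^{cf})$ is a well-defined functor of quasi-categories between the underlying $\infty$-categories of $\mathbf{N}$ and $\mathbf{M}$ (which are computed by $N_\Delta$ of the respective bifibrant subcategories).

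Next I would identify $N_\Delta(\mathbf{G}^{cf})$ with the derived functor $\mathbb{R}G$ on these $\infty$-categories. By Ken Brown's lemma, $G$ preserves weak equivalences between bifibrant objects, so the restriction of $G$ to $\mathbf{N}^{cf}$ descends canonically to an $\infty$-functor between Dwyer--Kan localizations which agrees by construction with $\mathbb{R}G$. Since $N_\Delta(\mathbf{G}^{cf})$ agrees with this restriction on objects and on $1$-morphisms, the universal property of the $\infty$-categorical localization (as modelled by $N_\Delta$ of the bifibrant subcategory) forces the two $\infty$-functors to be equivalent. The claim about mapping spaces is then immediate: for a cofibrant $A$ and fibrant $B$ in $\mathbf{N}$, the simplicial hom $\mathbf{N}(A,B)$ is a standard model for the derived mapping space $\mathbf{N}_0(A,B)^h$, and under this identification the simplicial action (\ref{equlemmainftyadjsemi}) corresponds to the derived action of $\mathbb{R}G$. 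When $G$ is additionally a right Quillen equivalence, $\mathbb{R}G$ is an equivalence of $\infty$-categories and hence induces equivalences on all mapping spaces, giving the final assertion.

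The main obstacle is precisely the identification in the second step, which is where it substantively matters that $\mathbf{G}$ (rather than its left adjoint) is simplicially enriched. The customary argument for a simplicial Quillen adjunction \cite[Proposition 5.2.4.6]{luriehtt} proceeds via derived (co)units of such an adjunction, which are unavailable here; the cleanest substitute is to invoke the universal property of the Dwyer--Kan localization together with Ken Brown's lemma applied directly to $G$, as sketched above.
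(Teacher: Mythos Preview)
Your proposal is correct and follows essentially the same route as the paper: restrict $\mathbf{G}$ to bifibrant objects, use that $N_{\Delta}(\mathbf{M}^{\mathrm{cf}})$ presents the $\infty$-categorical localization of $\mathbf{M}_0^{\mathrm{cf}}$ at the weak equivalences, and deduce that $N_{\Delta}(\mathbf{G}^{\mathrm{cf}})$ computes $\mathbb{R}G$ because both restrict to $G$ along $N(\mathbf{N}_0^{\mathrm{cf}})\hookrightarrow N_{\Delta}(\mathbf{N}^{\mathrm{cf}})$. The only difference is presentational: the paper exhibits the commuting square of simplicial categories explicitly and then invokes the counit of the $(\mathfrak{C},N_{\Delta})$-adjunction to identify the simplicial action $\mathbf{G}(A,B)$ with the mapping-space action of $N_{\Delta}(\mathbf{G})$, whereas you absorb this last step into the phrase ``under this identification the simplicial action corresponds to the derived action of $\mathbb{R}G$''.
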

\begin{proof}
For any simplicially enriched model category $\mathbf{M}$, the canonical inclusion
$\mathbf{M}_0^{\mathrm{cf}}\hookrightarrow\mathbf{M}^{\mathrm{cf}}$ associated to its full simplicial subcategory of bifibrant objects 
induces an inclusion of the category 
$N_{\Delta}(\mathbf{M}_0^{\mathrm{cf}})=N(\mathbf{M}_0^{\mathrm{cf}})$ into the quasi-category
$N_{\Delta}(\mathbf{M}^{\mathrm{cf}})$. This inclusion of quasi-categories presents the $\infty$-categorical 
localization of $N(\mathbf{M}_0^{\mathrm{cf}})$ at the class of weak equivalences by a folklore combination of results in the literature 
(summarized in \cite{cisinskimoinftyloc}). We are thus to show under the given assumptions that the square
\[\xymatrix{
N(\mathbf{N}_0^{\mathrm{cf}})\ar[r]^{N(G)}\ar@{^(->}[d] & N(\mathbf{M}_0^{\mathrm{cf}})\ar@{^(->}[d] \\
N(\mathbf{N}^{\mathrm{cf}})\ar[r]_{N_{\Delta}(\mathbf{G})} & N(\mathbf{M}^{\mathrm{cf}})
}\]
commutes (up to equivalence). This however follows directly from the trivial fact that the square
\[\xymatrix{
\mathbf{N}_0^{\mathrm{cf}}\ar[r]^{G}\ar@{^(->}[d] & \mathbf{M}_0^{\mathrm{cf}}\ar@{^(->}[d] \\
\mathbf{N}^{\mathrm{cf}}\ar[r]_{\mathbf{G}} & \mathbf{M}^{\mathrm{cf}}
}\]
commutes by assumption. For every pair of objects $A,B\in\mathbf{M}^{cf}$, the counit of the $(\mathfrak{C},N_{\Delta})$-adjunction induces a weak equivalence
\[\xymatrix{
\mathbf{N}^{\mathrm{cf}}(A,B)\ar[rr]^{\mathbf{G}(A,B)} & & \mathbf{M}^{\mathrm{cf}}(A,B) \\
\mathfrak{C}N_{\Delta}(\mathbf{N}^{\mathrm{cf}})(A,B)\ar[rr]_{\mathfrak{C}N_{\Delta}(\mathbf{G})(A,B)}\ar[u]^{\epsilon(A,B)} & & \mathfrak{C}N_{\Delta}(\mathbf{M}^{\mathrm{cf}})(A,B)\ar[u]_{\epsilon(A,B)}
}\]
of maps of spaces. The bottom map is equivalent to the functor $N_{\Delta}\mathbf{G}(A,B)$ via \cite[Proposition 2.2.2.7]{luriehtt} and 
\cite[Proposition 2.2.4.1]{luriehtt}. 
\end{proof}

The following proposition describes a simplicial action of the Unstraightening functor. Therefore, we exploit
%
that both simplicial categories $\mathbf{Fun}(\mathfrak{C}(\mathcal{C})^{op},\mathbf{S}^+)$ and $\mathbf{S}^+_{/\mathcal{C}}$ are
tensored over $\mathbf{S}$ in a very particular way: On the one hand, given a functor
$F\colon\mathfrak{C}(\mathcal{C})^{op}\rightarrow\mathbf{S}^+$ and a 
simplicial set $I$, the functor $I\otimes F\colon\mathfrak{C}(\mathcal{C})^{op}\rightarrow\mathbf{S}^+$ is defined as the product 
$c_{I^{\flat}}\times F$ for $c_{I^{\flat}}$ the constant functor with value $I^{\flat}\in\mathbf{S}^+$. On the other hand, given a map 
$f\colon X\rightarrow\mathcal{C}^{\sharp}$ of marked simplicial sets and a simplicial set $I$, the object
$I\otimes f\in\mathbf{S}^+_{/\mathcal{C}}$ is defined as the product $\pi_{I^{\flat}}\times_{\mathcal{C}} f$ for
$\pi_{I^{\flat}}\colon \mathcal{C}^{\sharp}\times I^{\flat}\rightarrow\mathcal{C}^{\sharp}$ the obvious projection. Dually this implies 
that the corresponding simplicial cotensors are in fact exponentials and hence $\infty$-categorical constructs as well.

\begin{proposition}\label{propunstraight1}
The Unstraightening functor exhibits a simplicial action
\[\mathbf{Un}_{\mathcal{C}}(F,G)\colon\mathbf{Fun}(\mathfrak{C}(\mathcal{C})^{op},\mathbf{S}^+)(F,G)\rightarrow\mathbf{S}^+_{/\mathcal{C}}(\mathrm{Un}(F),\mathrm{Un}(G))\]
of hom-objects which is an equivalence of (derived) hom-quasi-categories whenever $F$ is projectively cofibrant and $G$ is projectively 
fibrant (with respect to the cartesian model structure on $\mathbf{S}^+$). In particular, Unstraightening induces a simplicially 
enriched functor
\[\mathbf{Un}_{\mathcal{C}}\colon\mathbf{Fun}(\mathfrak{C}(\mathcal{C})^{op},\mathbf{S}^+)\rightarrow\mathbf{S}^+_{/\mathcal{C}}\] 
which induces a DK-equivalence of underlying strict $(\infty,2)$-categories.
\end{proposition}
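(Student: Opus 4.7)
The plan is to construct the simplicial enrichment of $\mathrm{Un}_{\mathcal{C}}$ explicitly, deduce the equivalence of (derived) hom-quasi-categories from Lemma~\ref{lemmainftyadjsemi}, and then derive the DK-equivalence using essential surjectivity afforded by the underlying Quillen equivalence.

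To construct the simplicial action, I would exploit the explicit description of tensors given just before the proposition. An $n$-simplex of $\mathbf{Fun}(\mathfrak{C}(\mathcal{C})^{op},\mathbf{S}^+)(F,G)$ is a map $\alpha\colon c_{(\Delta^n)^{\flat}}\times F\to G$, and applying the product-preserving right adjoint $\mathrm{Un}_{\mathcal{C}}$ yields a map
\[\mathrm{Un}_{\mathcal{C}}(c_{(\Delta^n)^{\flat}})\times_{\mathcal{C}^{\sharp}}\mathrm{Un}_{\mathcal{C}}(F)\to\mathrm{Un}_{\mathcal{C}}(G).\]
Since Unstraightening is compatible with base change along $\mathcal{C}\to\ast$ of the indexing quasi-category, there is a canonical identification $\mathrm{Un}_{\mathcal{C}}(c_{X})\cong\mathrm{Un}_{\ast}(X)\times\mathcal{C}^{\sharp}$ natural in $X\in\mathbf{S}^+$. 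Precomposing the above map with
\[\bar{q}_{(\Delta^n)^{\flat}}\times\mathrm{id}\colon(\Delta^n)^{\flat}\times\mathrm{Un}_{\mathcal{C}}(F)\longrightarrow\mathrm{Un}_{\ast}((\Delta^n)^{\flat})\times\mathrm{Un}_{\mathcal{C}}(F)\]
built from Corollary~\ref{corunstraight1} then produces an $n$-simplex of $\mathbf{S}^+_{/\mathcal{C}^{\sharp}}(\mathrm{Un}_{\mathcal{C}}(F),\mathrm{Un}_{\mathcal{C}}(G))$, since the explicit description of tensors in the slice identifies $(\Delta^n)^{\flat}\times\mathrm{Un}_{\mathcal{C}}(F)$ with $\Delta^n\otimes\mathrm{Un}_{\mathcal{C}}(F)$. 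For $n=0$ this recovers the underlying functor $\mathrm{Un}_{\mathcal{C}}$.

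Next, I would verify that this prescription is strictly compatible with identities and composition of simplices in the two simplicial hom-objects, thereby making $\mathbf{Un}_{\mathcal{C}}$ a simplicial functor. With the enrichment in place, Lemma~\ref{lemmainftyadjsemi} applies (since every object of $\mathbf{S}^+_{/\mathcal{C}^{\sharp}}$ is cofibrant) and identifies the simplicial action, up to homotopy, with the derived action on mapping spaces of the right Quillen equivalence $\mathrm{Un}_{\mathcal{C}}$; it is therefore an equivalence of hom-quasi-categories whenever $F$ is projectively cofibrant and $G$ is projectively fibrant. For the DK-equivalence this yields full faithfulness on hom-quasi-categories; essential surjectivity on the associated homotopy categories follows again from the Quillen equivalence, since every bifibrant $X\in\mathbf{S}^+_{/\mathcal{C}^{\sharp}}$ is weakly equivalent to $\mathrm{Un}_{\mathcal{C}}(\tilde{F})$, where $\tilde{F}$ is a projectively fibrant replacement of $\mathrm{St}_{\mathcal{C}}(X)$.

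The main obstacle is the strict functoriality of the simplicial action, that is, checking that the prescription above respects composition on the nose. This is a coherence verification that reduces to naturality of $\bar{q}$ together with the compatibility of $\mathrm{Un}_{\mathcal{C}}$ with finite products and with the constant-diagram functor $c_{(\blank)}$; it is routine, but it requires bookkeeping of the implicit identifications so that the composition axioms of a simplicial enrichment are satisfied strictly, and not merely up to coherence isomorphisms.
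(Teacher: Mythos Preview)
Your construction of the simplicial action is the same as the paper's: apply $\mathrm{Un}_{\mathcal{C}}$ to the tensor description of simplices, use product preservation and base-change stability to identify $\mathrm{Un}_{\mathcal{C}}(c_{(\Delta^n)^{\flat}})$ with $\pi_{\mathrm{Un}_{\ast}((\Delta^n)^{\flat})}$, and then precompose with the comparison $\bar{q}_{(\Delta^n)^{\flat}}$.

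There is, however, a genuine gap in your deduction of the equivalence of hom-quasi-categories. Lemma~\ref{lemmainftyadjsemi} is stated and proved for \emph{simplicial} model categories in the classical sense (Kan-complex enriched), and its conclusion is an equivalence of derived mapping \emph{spaces}. Feeding your $(\mathbf{S},\mathrm{QCat})$-enriched functor $\mathbf{Un}_{\mathcal{C}}$ directly into it would at best tell you that the core $\mathbf{Un}_{\mathcal{C}}(F,G)^{\simeq}$ is an equivalence of Kan complexes, which is strictly weaker than $\mathbf{Un}_{\mathcal{C}}(F,G)$ being a categorical equivalence. The paper closes this gap by a complete-Segal-space argument: it checks instead that for every $n$ the map
\[(\mathbf{Nat}(F,G)^{\Delta^n})^{\simeq}\longrightarrow(\mathbf{S}^+_{/\mathcal{C}}(\mathrm{Un}_{\mathcal{C}}F,\mathrm{Un}_{\mathcal{C}}G)^{\Delta^n})^{\simeq}\]
is an equivalence of spaces. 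Via the tensor adjunction this unwinds to the composite
\[\mathbf{Nat}(\Delta^n\otimes F,G)^{\simeq}\xrightarrow{\ \mathbf{Un}_{\mathcal{C}}^{\simeq}\ }\mathbf{S}^+_{/\mathcal{C}}(\mathrm{Un}_{\mathcal{C}}(\Delta^n\otimes F),\mathrm{Un}_{\mathcal{C}}G)^{\simeq}\xrightarrow{(\pi_{\bar{q}}\times 1)^{\ast}}\mathbf{S}^+_{/\mathcal{C}}(\Delta^n\otimes\mathrm{Un}_{\mathcal{C}}F,\mathrm{Un}_{\mathcal{C}}G)^{\simeq},\]
and now the two pieces are handled separately: the first is an equivalence by Lemma~\ref{lemmainftyadjsemi}, this time legitimately applied to the Kan-enriched model structure with hom-objects $\mathbf{Nat}(\cdot,\cdot)^{\simeq}$; the second is an equivalence because $\bar{q}_{(\Delta^n)^{\flat}}$ is a cartesian equivalence (Corollary~\ref{corunstraight1}) and $\mathrm{Un}_{\mathcal{C}}G$ is fibrant. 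Your outline conflates these two steps into a single appeal to the lemma, which the lemma as stated does not support.
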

\begin{proof}
In the following for the sake of readability, we denote the simplicial hom-set of two functors
$F,G\in\mathbf{Fun}(\mathfrak{C}(\mathcal{C})^{op},\mathbf{S}^+)$ by $\mathbf{Nat}(F,G)$. We have a chain of binatural transformations 
of simplicial sets as follows.
\begin{align}
\notag \mathbf{Nat}(F,G) & \cong (\mathbf{Nat}(F,G))^{\Delta^{\bullet}})_0\\
\notag & \cong\mathbf{Nat}(\Delta^{\bullet}\otimes F,G)_0 \\
\notag & \cong \mathbf{Nat}(c_{(\Delta^{\bullet})^{\flat}}\times F,G)_0 \\
\label{equpropunstraight11} & \xrightarrow{\mathrm{Un}_{\mathcal{C}}} \mathbf{S}^+_{/\mathcal{C}}(\mathrm{Un}_{\mathcal{C}}(c_{(\Delta^{\bullet})^{\flat}}\times F),\mathrm{Un}_{\mathcal{C}}(G))_0 \\
\label{equpropunstraight12} & \cong \mathbf{S}^+_{/\mathcal{C}}(\mathrm{Un}_{\mathcal{C}}(c_{(\Delta^{\bullet})^{\flat}})\times\mathrm{Un}_{\mathcal{C}}(F),\mathrm{Un}_{\mathcal{C}}(G))_0 \\
\label{equpropunstraight13} & \cong\mathbf{S}^+_{/\mathcal{C}}(\pi_{\mathrm{Un}_{\ast}((\Delta^{\bullet})^{\flat})}\times \mathrm{Un}_{\mathcal{C}}(F),\mathrm{Un}_{\mathcal{C}}(G))_0 \\
\label{equpropunstraight14} & \xrightarrow{(\pi_{\bar{q}_{(\Delta^{\bullet})^{\flat}}}\times 1)^{\ast}} \mathbf{S}^+_{/\mathcal{C}}(\pi_{(\Delta^{\bullet})^{\flat}}\times\mathrm{Un}_{\mathcal{C}}(F),\mathrm{Un}_{\mathcal{C}}(G))_0 \\
\notag & \cong \mathbf{S}^+_{/\mathcal{C}}(\Delta^{\bullet}\otimes\mathrm{Un}_{\mathcal{C}}(F),\mathrm{Un}_{\mathcal{C}}(G))_0 \\
\notag & \cong (\mathbf{S}^+_{/\mathcal{C}}(\mathrm{Un}_{\mathcal{C}}(F),\mathrm{Un}_{\mathcal{C}}(G))^{\Delta^{\bullet}})_0 \\
\notag & \cong\mathbf{S}^+_{/\mathcal{C}}(\mathrm{Un}_{\mathcal{C}}(F),\mathrm{Un}_{\mathcal{C}}(G))
\end{align}

Here, the arrow in (\ref{equpropunstraight11}) is the natural action of Unstraightening as a 1-functor. The natural map in Line 
(\ref{equpropunstraight12}) is an isomorphism by the fact that Unstraightening is a right adjoint and hence preserves products.
The natural isomorphism in Line (\ref{equpropunstraight13}) is an instance of pullback-stability of the Unstraightening construction
\cite[Observation 2.13]{hhr_str}. Lasty, the action in Line (\ref{equpropunstraight14}) is given by the pullback of the natural 
transformation $\bar{q}|_{\Delta}\colon (\Delta^{\bullet})^{\flat}\rightarrow\mathrm{Un}_{\ast}((\Delta^{\bullet})^{\flat})$ from 
(\ref{equdefcompcotensors}) along $\mathcal{C}^{\sharp}\rightarrow 1$. We obtain a composite simplicial action
\begin{align}\label{equpropunstraightcomp}
\mathbf{Un}_{\mathcal{C}}(F,G)\colon\mathbf{Nat}(F,G)\rightarrow\mathbf{S}^+_{/\mathcal{C}}(\mathrm{Un}_{\mathcal{C}}(F),\mathrm{Un}_{\mathcal{C}}(G))
\end{align}
which returns $\mathrm{Un}_{\mathcal{C}}(F,G)$ on sets of vertices.

Whenever $G$ is projectively fibrant, then $\mathrm{Un}_{\mathcal{C}}(G)$ is fibrant in $(\mathbf{S}^+_{/\mathcal{C}},\mathrm{Cart})$. 
The object $\mathrm{Un}_{\mathcal{C}}(F)$ is always cofibrant in $(\mathbf{S}^+_{/\mathcal{C}},\mathrm{Cart})$. It follows that whenever 
$F$ is projectively cofibrant and $G$ is projectively fibrant, both $\mathbf{Nat}(F,G)$ and
$\mathbf{S}^+_{/\mathcal{C}}(\mathrm{Un}_{\mathcal{C}}(F),\mathrm{Un}_{\mathcal{C}}(G))$ are (the respectively derived)
hom-quasi-categories. We are left to show that the action (\ref{equpropunstraightcomp}) is a categorical equivalence in this case. 
Therefore, it suffices to show that the induced functor
\[(\mathbf{Un}_{\mathcal{C}}(F,G)^{\Delta^{\bullet}})^{\simeq}\colon(\mathbf{Nat}(F,G)^{\Delta^{\bullet}})^{\simeq}\rightarrow(\mathbf{S}^+_{/\mathcal{C}}(\mathrm{Un}_{\mathcal{C}}(F),\mathrm{Un}_{\mathcal{C}}(G))^{\Delta^{\bullet}})^{\simeq}\]
of complete Segal spaces is a pointwise equivalence of spaces. The latter is by construction naturally isomorphic to the composition
\[\mathbf{Nat}(\Delta^{\bullet}\otimes F,G)^{\simeq}\xrightarrow{\mathbf{Un}_{\mathcal{C}}(\Delta^{\bullet}\otimes F,G)^{\simeq}}\mathbf{S}^+_{/\mathcal{C}}(\mathrm{Un}_{\mathcal{C}}(\Delta^{\bullet}\otimes F),\mathrm{Un}_{\mathcal{C}}(G))^{\simeq}\xrightarrow{(\pi_{\bar{q}_{(\Delta^{\bullet})^{\flat}}}\times 1)^{\ast}}\mathbf{S}^+_{/\mathcal{C}}(\Delta^{\bullet}\otimes\mathrm{Un}_{\mathcal{C}}(F),\mathrm{Un}_{\mathcal{C}}(G))^{\simeq}.\]
Now, first, the left component of this composition is a pointwise equivalence of hom-spaces. Indeed,
$\mathbf{Fun}(\mathfrak{C}(\mathcal{C})^{op},\mathbf{S}^+)$ is a simplicial model category when considered to be equipped with the
hom-objects $\mathbf{Nat}(F,G)^{\simeq}$. Thus we may use that $\mathrm{Un}_{\mathcal{C}}$ is a right Quillen equivalence as shown in 
\cite[Theorem 3.2.0.1]{luriehtt} and apply Lemma~\ref{lemmainftyadjsemi}.
Second, $\mathbf{S}^+_{/\mathcal{C}}$ is a $(\mathbf{S},\mathrm{QCat})$-enriched model category, the object
$\mathrm{Un}_{\mathcal{C}}(G)$ is fibrant by assumption and all its objects are cofibrant. Thus, to show that the map
$(1\times\pi_{\bar{q}_{(\Delta^{\bullet})^{\flat}}})^{\ast}$ is a categorical equivalence it suffices to show that the map
$1\times\pi_{\bar{q}_{(\Delta^{n})^{\flat}}}\colon\mathrm{Un}_{\mathcal{C}}(F)\times\pi_{(\Delta^n)^{\flat}}\rightarrow\mathrm{Un}_{\mathcal{C}}(F)\times\pi_{\mathrm{Un}_{\ast}((\Delta^n)^{\flat})}$ is a cartesian equivalence for every $n\geq 0$. Therefore in turn 
it suffices via \cite[Proposition 3.1.4.2]{luriehtt} to show that
$\bar{q}\colon(\Delta^n)^{\flat}\rightarrow\mathrm{Un}_{\ast}((\Delta^n)^{\flat})$ is a cartesian equivalence in $\mathbf{S}^+$ for 
every $n\geq 0$. This however is given by Corollary~\ref{corunstraight1} (or even directly by Lemma~\ref{lemmaunstraight1} as
$\Delta^n$ is a quasi-category with no non-trivial equivalences, so the marked simplicial set
$(\Delta^n)^{\flat}=(\Delta^n)^{\natural}$ is fibrant in $\mathbf{S}^+$).
%
\end{proof}

\begin{corollary}\label{corcotensors}
The Unstraightening functor exhibits a binatural transformation
\begin{align}\label{equcorcotensors}
\gamma\colon\mathrm{Un}_{\mathcal{C}}(F^I)\rightarrow\mathrm{Un}_{\mathcal{C}}(F)^I
\end{align}
between the respective simplicial cotensors for $F\colon\mathfrak{C}(\mathcal{C})^{op}\rightarrow\mathbf{S}^+$ and $I\in\mathbf{S}$, 
which is a cartesian equivalence whenever $F$ is projectively fibrant. 
\end{corollary}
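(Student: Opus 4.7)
The plan is to define $\gamma$ as the canonical cotensor comparison attached to any simplicially enriched functor, using the enrichment $\mathbf{Un}_{\mathcal{C}}$ established in Proposition~\ref{propunstraight1}, and then to verify the equivalence claim via a Yoneda-style test on derived mapping spaces. Concretely, the counit of the cotensor adjunction in $\mathbf{Fun}(\mathfrak{C}(\mathcal{C})^{op},\mathbf{S}^+)$ supplies a canonical map $I\to\mathbf{Nat}(F^I,F)$, whose postcomposition with the simplicial action
\[\mathbf{Un}_{\mathcal{C}}(F^I,F)\colon\mathbf{Nat}(F^I,F)\to\mathbf{S}^+_{/\mathcal{C}}(\mathrm{Un}_{\mathcal{C}}(F^I),\mathrm{Un}_{\mathcal{C}}(F))\]
transposes under the cotensor adjunction in $\mathbf{S}^+_{/\mathcal{C}}$ to yield $\gamma$; binaturality in $F$ and $I$ is then automatic from the universal property.

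For the equivalence claim, I first note that whenever $F$ is projectively fibrant, so is $F^I$, because cotensors preserve fibrancy in any simplicial model category and every simplicial set is Joyal-cofibrant. Hence both $\mathrm{Un}_{\mathcal{C}}(F^I)$ and $\mathrm{Un}_{\mathcal{C}}(F)^I$ are fibrant in $(\mathbf{S}^+_{/\mathcal{C}},\mathrm{Cart})$, so to show that $\gamma$ is a cartesian equivalence it suffices to exhibit weak equivalences on derived mapping spaces out of bifibrant test objects. Since $\mathrm{Un}_{\mathcal{C}}$ is a right Quillen equivalence, every such bifibrant object is cartesian-equivalent to $\mathrm{Un}_{\mathcal{C}}(\tilde{F})$ for some projectively bifibrant $\tilde{F}$, and we may restrict testing to that class. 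Two applications of Proposition~\ref{propunstraight1}, together with the fact that hom-objects commute with cotensors on both sides, then produce equivalences
\[\mathbf{S}^+_{/\mathcal{C}}(\mathrm{Un}_{\mathcal{C}}(\tilde{F}),\mathrm{Un}_{\mathcal{C}}(F^I))\simeq\mathbf{Nat}(\tilde{F},F)^I\simeq\mathbf{S}^+_{/\mathcal{C}}(\mathrm{Un}_{\mathcal{C}}(\tilde{F}),\mathrm{Un}_{\mathcal{C}}(F)^I),\]
and the composite should agree up to homotopy with postcomposition by $\gamma$.

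The main obstacle is this last compatibility check, which is not entirely formal: one must verify that the induced self-equivalence of $\mathbf{Nat}(\tilde{F},F)^I$ is homotopic to the identity. This is a mate-calculus argument, tracing the definition of $\gamma$ back through its transpose and using naturality of the simplicial action in both variables; once spelled out, it just says that the cotensor comparison attached to an enriched functor is itself compatible with the enrichment, which in our situation identifies both routes with the canonical map. No additional $\infty$-categorical input beyond Proposition~\ref{propunstraight1} is required.
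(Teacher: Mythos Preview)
Your argument is correct and follows essentially the same route as the paper. Both define $\gamma$ as the canonical cotensor comparison attached to the simplicial enrichment $\mathbf{Un}_{\mathcal{C}}$ from Proposition~\ref{propunstraight1}, and both verify that $\gamma_\ast$ is an equivalence by a Yoneda test against objects of the form $\mathrm{Un}_{\mathcal{C}}(G)$ for $G$ projectively bifibrant, using Proposition~\ref{propunstraight1} twice together with a commutativity check.

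Two remarks on the comparison. First, your ``main obstacle'' is not actually an obstacle: the compatibility you worry about is the standard naturality square for the cotensor comparison of a $\mathbf{V}$-enriched functor, and it commutes on the nose (not merely up to homotopy). This is precisely what the paper asserts when it says its rectangle ``commutes by construction of $\gamma$''; no mate calculus beyond the definition is needed. Second, there is a mild organizational difference. The paper transposes to tensors, so its bottom row reads $\mathbf{Nat}(I\otimes G,F)\xrightarrow{\mathbf{Un}_{\mathcal{C}}}\mathbf{S}^+_{/\mathcal{C}}(\mathrm{Un}_{\mathcal{C}}(I\otimes G),\mathrm{Un}_{\mathcal{C}}(F))\xrightarrow{(\pi_{\bar{q}_{I^\flat}}\times 1)^\ast}\mathbf{S}^+_{/\mathcal{C}}(I\otimes\mathrm{Un}_{\mathcal{C}}(G),\mathrm{Un}_{\mathcal{C}}(F))$, which forces a reduction to quasi-categorical $I$ and an explicit appeal to Corollary~\ref{corunstraight1} to see that the second factor is an equivalence. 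You instead stay on the cotensor side and use that $\mathbf{Un}_{\mathcal{C}}(\tilde{F},F)^I$ is an equivalence because $(\cdot)^I$ preserves Joyal equivalences between quasi-categories; this packages the appeal to $\bar{q}$ entirely inside Proposition~\ref{propunstraight1} and avoids the separate reduction step (cf.\ Remark~\ref{remcorstraight1}).
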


\begin{proof}
The composite binatural transformation
\[I\otimes\mathrm{Un}_{\mathcal{C}}(F^I)\cong\pi_{I^{\flat}}\times\mathrm{Un}_{\mathcal{C}}(F^I)\xrightarrow{\pi_{\bar{q}_{I^{\flat}}}\times 1}\pi_{\mathrm{Un}_{\ast}(I^{\flat})}\times\mathrm{Un}_{\mathcal{C}}(F^I)\cong\mathrm{Un}_{\mathcal{C}}(c_{I^{\flat}})\times
\mathrm{Un}_{\mathcal{C}}(F^I)\xrightarrow{\mathrm{Un}_{\mathcal{C}}(\mathrm{ev}_I)}\mathrm{Un}_{\mathcal{C}}(F)\]
yields a binatural transformation $\gamma$ as in (\ref{equcorcotensors}) via the respective tensor/cotensor adjunction. We are to 
verify that $\gamma$ is a cartesian equivalence whenever $F$ is projectively fibrant. Therefore, as $\gamma$ is natural in both 
arguments, as both model categories are $(\mathbf{S},\mathrm{QCat})$-enriched, and as $\mathrm{Un}_{\mathcal{C}}$ is right Quillen, we 
may assume without loss of generality that $I$ is a quasi-category. 
Furthermore, since $\mathrm{Un}_{\mathcal{C}}$ is essentially surjective, it suffices to show for every projectively bifibrant 
object $G\in\mathrm{Fun}(\mathfrak{C}(\mathcal{C})^{op},\mathbf{S}^+)$ that the functor 
\[\gamma_{\ast}\colon\mathbf{S}^+_{/\mathcal{C}}(\mathrm{St}_{\mathcal{C}}(G),\mathrm{Un}_{\mathcal{C}}(F^I))\rightarrow\mathbf{S}^+_{/\mathcal{C}}(\mathrm{St}_{\mathcal{C}}(G),\mathrm{Un}_{\mathcal{C}}(F)^I)\]
is an equivalence of quasi-categories. To do so we consider the following commutative diagram which commutes by construction of
$\gamma$.
\[\xymatrix{
\mathbf{Nat}(G,F^I)\ar[r]^(.4){\mathbf{Un}_{\mathcal{C}}}\ar[d]_[@!90]{\cong} & \mathbf{S}^+_{/\mathcal{C}}(\mathrm{Un}_{\mathcal{C}}(G),\mathrm{Un}_{\mathcal{C}}(F^I))\ar[r]^{\gamma_{\ast}} & \mathbf{S}^+_{/\mathcal{C}}(\mathrm{Un}_{\mathcal{C}}(G),\mathrm{Un}_{\mathcal{C}}(F)^I)\ar[d]^[@!-90]{\cong}\\
\mathbf{Nat}(I\otimes G,F)\ar[r]_(.4){\mathbf{Un}_{\mathcal{C}}} & \mathbf{S}^+_{/\mathcal{C}}(\mathrm{Un}_{\mathcal{C}}(I\otimes G),\mathrm{Un}_{\mathcal{C}}(F))\ar[r]_{(\pi_{\bar{q}_{I^{\flat}}}\times 1)^{\ast}} & \mathbf{S}^+_{/\mathcal{C}}(I\otimes\mathrm{Un}_{\mathcal{C}}(G),\mathrm{Un}_{\mathcal{C}}(F))
}\]
The two vertical actions $\mathbf{Un}_{\mathcal{C}}$ are equivalences of quasi-categories by Proposition~\ref{propunstraight1}. The map 
$\pi_{\bar{q}_{I^{\flat}}}\times 1\colon\pi_{I^{\flat}}\times\mathbf{Un}_{\mathcal{C}}(G)\rightarrow\pi_{\mathrm{Un}_{\ast}(I^{\flat})}\times\mathbf{Un}_{\mathcal{C}}(G)$ is a cartesian equivalence (between cofibrant objects) by Corollary~\ref{corunstraight1} and 
the fact that $\mathrm{Un}_{\mathcal{C}}(G)\in\mathbf{S}^+_{/\mathcal{C}}$ is fibrant. Thus, $\gamma_{\ast}$ is an equivalence by
2-out-of-3.
\end{proof}

\begin{remark}\label{remcorstraight1}
The argument in the proof of Corollary~\ref{corcotensors} that reduces arbitrary simplicial cotensors to quasi-categorical ones may 
appear dodgy on first sight, given that Corollary~\ref{corunstraight1} is central to the proof but has been shown for quasi-categories 
only. Against the background of Proposition~\ref{propunstraight1} however one can generalize Corollary~\ref{corcotensors} to apply to 
all simplicial sets directly as well.
\end{remark}

\section{Unstraightening as a weak cosmological biequivalence}\label{seccosmo}

The notion of an $\infty$-cosmos as defined in \cite{riehlverityelements} introduces a powerful framework for the 
abstract study of formal $\infty$-category theory with a wide range of applications. 
We recall that an $\infty$-cosmos is a $(\mathbf{S},\mathrm{QCat})$-enriched fibration category $\mathbf{K}$ with countable sequential 
limits of fibrations and small products in which all objects are cofibrant (\cite[Definition 1.2.1]{riehlverityelements}). 
These $\infty$-cosmoses can be understood as presentations of structurally well enough behaved $(\infty,2)$-categories similarly how 
simplicial model categories are understood as presentations of structurally well enough behaved $\infty$-categories. Case in point, 
every $(\mathbf{S},\mathrm{QCat})$-enriched model category $\mathbf{M}$ in which all fibrant objects are cofibrant gives rise to an 
underlying $\infty$-cosmos $\mathbf{K}:=\mathbf{M}^f$ of fibrant objects. In particular, for every quasi-category $\mathcal{C}$, the 
simplicial categories $\mathbf{Cart}(\mathcal{C})$ and
$\mathbf{Fun}(\mathfrak{C}(\mathcal{C})^{op},(\mathbf{S}^+,\mathrm{Cart})),\mathrm{Inj})^f$ come equipped with the structure of an
$\infty$-cosmos. 
Riehl and Verity furthermore define the notion of a cosmological functor between $\infty$-cosmoses; that is, a simplicially enriched 
functor which preserves fibrations as well as all the ``cosmological'' limits of \cite[Definition 1.2.1.(i)]{riehlverityelements} up 
to natural isomorphism (\cite[Definition 1.3.1.]{riehlverityelements}).

Given that $\infty$-cosmoses serve as a formal environment for the practice of $\infty$-category theory (at times referred to as 
``synthetic'' $\infty$-category theory), cosmological functors serve as handy compilers between such for a plethora of
$\infty$-categorical structures (\cite[Proposition 10.1.4]{riehlverityelements}). And given that Unstraightening constitutes an equivalence 
between indexed and fibered $\infty$-category theory as stated in the Introduction, one might want to use the Unstraightening functor 
as such a compiler. However, as this Unstraightening functor does not preserve simplicial cotensors on 
the nose, it cannot be cosmological. Therefore, one may introduce the following definition.

\begin{definition}\label{defquasicosmfunctor}
Let $G\colon\mathbf{K}\rightarrow\mathbf{V}$ be a simplicially enriched functor between $\infty$-cosmoses. Say that $G$ is
\emph{pseudo-cosmological} if
\begin{enumerate}
\item $G$ preserves fibrations, 
\item $G$ preserves products, pullbacks of fibrations and countable sequential limits of fibrations (that is, all ordinary cosmological limits of \cite[Definition 1.2.1.(i)]{riehlverityelements}), and 
\item $G$ preserves simplicial cotensors up to natural equivalence. That means, there is a simplicially enriched binatural 
transformation $\gamma_{(C,J)}\colon G(C^J)\rightarrow G(C)^{J}$ that is pointwise an equivalence in $\mathbf{V}$.
\end{enumerate}
Say that $G$ is a \emph{pseudo-cosmological embedding} if $G$ furthermore induces local equivalences of hom-quasi-categories.
Say that $G$ is a \emph{pseudo-cosmological biequivalence} if $G$ furthermore is essentially surjective on homotopy 
categories. 
\end{definition}

Clearly every cosmological functor/embedding/biequivalence between $\infty$-cosmoses is a pseudo-cosmological
functor/embedding/biequivalence. The notions are furthermore closed under composition. By Theorem~\ref{thmunstraight+} they also 
include the following example.

\begin{corollary}
The restricted Unstraightening construction
\[\mathbf{Fun}(\mathfrak{C}(\mathcal{C})^{op},(\mathbf{S}^+,\mathrm{Cart})),\mathrm{Inj})^f\hookrightarrow\mathbf{Fun}(\mathfrak{C}(\mathcal{C})^{op},(\mathbf{S}^+,\mathrm{Cart})),\mathrm{Proj})^f\xrightarrow{\mathbf{Un}_{\mathcal{C}}}\mathbf{Cart}(\mathcal{C})\]
defined on the $\infty$-cosmos of injectively fibrant marked simplicial presheaves is a pseudo-cosmological biequivalence of
$\infty$-cosmoses.
\end{corollary}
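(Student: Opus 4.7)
The plan is to factor the composite through its two evident stages and verify that each stage is a pseudo-cosmological biequivalence separately. Since those notions are closed under composition (as recorded right after Definition~\ref{defquasicosmfunctor}), the composite will inherit the property. This reduces the task to two essentially independent verifications, and both of them should follow almost directly from facts already established in the note.

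For the first stage, the inclusion $\iota\colon\mathbf{Fun}(\mathfrak{C}(\mathcal{C})^{op},(\mathbf{S}^+,\mathrm{Cart}))_{\mathrm{Inj}}^{f}\hookrightarrow\mathbf{Fun}(\mathfrak{C}(\mathcal{C})^{op},(\mathbf{S}^+,\mathrm{Cart}))_{\mathrm{Proj}}^{f}$, I would exploit that the injective and projective model structures have the same (pointwise) class of weak equivalences and that every injective fibration is a projective fibration. Thus $\iota$ is a full simplicial subcategory inclusion. It preserves all cosmological limits and simplicial cotensors on the nose (they are computed pointwise in $\mathbf{S}^+$ in either model structure), preserves fibrations, and induces the identity on every hom-quasi-category, so it is automatically a pseudo-cosmological embedding. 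Essential surjectivity on homotopy categories follows by applying an injective fibrant replacement to an arbitrary projectively fibrant object: such a replacement is a pointwise weak equivalence, hence an isomorphism in the homotopy category that lands in the image of $\iota$.

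For the second stage $\mathbf{Un}_{\mathcal{C}}$, I would verify each clause of Definition~\ref{defquasicosmfunctor} by appealing to the machinery already set up. Preservation of fibrations and of the ordinary cosmological limits of \cite[Definition~1.2.1(i)]{riehlverityelements} is immediate from $\mathrm{Un}_{\mathcal{C}}$ being a right Quillen right adjoint. Preservation of simplicial cotensors up to natural equivalence (for projectively fibrant inputs) is exactly the content of Corollary~\ref{corcotensors}. The local equivalence of hom-quasi-categories at bifibrant inputs is the ``in particular'' clause of Proposition~\ref{propunstraight1}. Essential surjectivity on homotopy categories follows from the Quillen equivalence of \cite[Theorem~3.2.0.1]{luriehtt}: for any bifibrant $X\in(\mathbf{S}^+_{/\mathcal{C}^{\sharp}},\mathrm{Cart})$, a projectively fibrant replacement $F$ of $\mathrm{St}_{\mathcal{C}}(X)$ satisfies $\mathrm{Un}_{\mathcal{C}}(F)\simeq X$ in the homotopy category via the derived counit.

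I do not expect a substantial obstacle, since the corollary amounts to repackaging Theorem~\ref{thmunstraight+} in the language of $\infty$-cosmoses, and the technical heart of the argument was already settled in Proposition~\ref{propunstraight1} and Corollary~\ref{corcotensors}. The only point requiring a moment of care is matching the simplicial cotensor structure of the injectively fibrant $\infty$-cosmos with the one used in Corollary~\ref{corcotensors}, but this is automatic because cotensors are computed pointwise in $\mathbf{S}^+$ and injectively fibrant presheaves are stable under them.
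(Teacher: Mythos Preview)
Your outline is essentially what the paper has in mind; the paper itself records the corollary as an immediate consequence of Theorem~\ref{thmunstraight+} and gives no further argument. Your write-up supplies the details the paper elides, and the ingredients you cite (right Quillen preservation of fibrations and ordinary limits, Corollary~\ref{corcotensors} for cotensors, Proposition~\ref{propunstraight1} for local equivalences, the Quillen equivalence for essential surjectivity) are the right ones.

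There is one technical wrinkle in your two-stage factorisation. The notion of a pseudo-cosmological functor in Definition~\ref{defquasicosmfunctor} is formulated between $\infty$-cosmoses, and the paper only asserts that $\mathbf{Fun}(\mathfrak{C}(\mathcal{C})^{op},\mathbf{S}^+)_{\mathrm{Inj}}^{f}$ and $\mathbf{Cart}(\mathcal{C})$ are $\infty$-cosmoses; it does not claim this for the intermediate $\mathbf{Fun}(\mathfrak{C}(\mathcal{C})^{op},\mathbf{S}^+)_{\mathrm{Proj}}^{f}$, and indeed projectively fibrant diagrams need not be projectively cofibrant in general. Relatedly, Proposition~\ref{propunstraight1} gives the hom-equivalence only when the source $F$ is projectively \emph{cofibrant}, which is not automatic for injectively fibrant $F$. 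Both issues are harmless once you verify the conditions on the composite directly rather than stage-by-stage: for $F,G$ injectively fibrant, $\mathbf{Nat}(F,G)$ already computes the derived hom-quasi-category (all objects are injectively cofibrant and the two model structures share weak equivalences), and a projectively cofibrant replacement $F'\xrightarrow{\sim} F$ together with 2-out-of-3 upgrades the conclusion of Proposition~\ref{propunstraight1} to such inputs. With that small rerouting your argument goes through and matches the paper's intent.
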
\qed

\begin{conclusion*}
By \cite[Proposition 10.3.6]{riehlverityelements} cosmological biequivalences do not only preserve but also reflect and create a 
plethora of internal $\infty$-categorical properties and structures. And it is straight-forward (but a little lengthy) to recover
\cite[Propositions 10.1.4 and 10.3.6]{riehlverityelements} regarding the transfer of equivalence classes of these
$\infty$-cosmological structures between $\infty$-cosmoses along cosmological biequivalences for this weaker pseudo-cosmological 
notion as well. That means, one can show that such functors preserve (and such biequivalences furthermore create/reflect) equivalence 
classes of all the $\infty$-cosmological structures/properties which are listed in \cite[Proposition 10.3.6]{riehlverityelements} in 
1-1 correspondence. This for instance implies that Unstraightening yields an equivalence between 
fibered and indexed adjunctions, reflective localizations, limits and colimits, modules etc.\ over any given base $\mathcal{C}$.
\end{conclusion*}

\bibliographystyle{amsalpha}
\bibliography{BSBib}
\Address
\end{document}